\definecolor{gray}{gray}{0.6}
\theoremstyle{plain}
\newtheorem{thm}{Theorem} 
\theoremstyle{remark}
\newtheorem{remark}{Remark}
\newtheorem*{remark*}{Remark}
\newcommand{\vertiii}[1]{{\left\vert\kern-0.25ex\left\vert\kern-0.25ex\left\vert #1 
		\right\vert\kern-0.25ex\right\vert\kern-0.25ex\right\vert}}
\newcommand\norm[1]{\left\lVert#1\right\rVert}
\begin{document}

\title{A \textit{priori} error analysis for transient problems using Enhanced Velocity approach in the discrete-time setting.}


\author[1,2]{Yerlan Amanbek}
\author[1]{Mary F. Wheeler}
\affil[1]{Center for Subsurface Modeling, Institute for Computational Engineering and Sciences, University of Texas at Austin}
\affil[2]{School of Science and Technology, Nazarbayev University}
\affil[ ]{\textit{yerlan.amanbek@nu.edu.kz, mfw@ices.utexas.edu}}


\date{\today}
\maketitle

\begin{abstract}
Time discretization along with space discretization is important in the numerical simulation of subsurface flow applications for long run.
In this paper, we derive theoretical convergence error estimates in discrete-time setting for transient problems with the Dirichlet boundary condition.
Enhanced Velocity Mixed FEM as domain decomposition method is used in the space discretization  and the backward Euler method and the Crank-Nicolson method are considered in the discrete-time setting. Enhanced Velocity scheme was used in the adaptive mesh refinement dealing with heterogeneous porous media \cite{amanbek2017adaptive,singh2017adaptive} for single phase flow and transport and demonstrated as mass conservative and efficient method. 
Numerical tests validating the backward Euler theory are presented.
This error estimates are useful in the determining of time step size and the space discretization size.
\end{abstract}

\begin{keyword}
 a priori error analysis, enhanced velocity, mixed finite element method, error estimates, Darcy flow.
\end{keyword}



\newcommand{\bs}[1]{\boldsymbol{#1}}

\section{Introduction}
The most subsurface flow equations are dynamic and time-dependent problems. In decision making process, numerical simulation of flow plays vital role in many engineering applications such as oil and gas production evaluation, CO$_2$ sequestration and contaminate transport problems. It is natural to deal with non-matching multiblock grids in the reservoir simulation since subsurface parameters such as permeability or porosity can vary over subdomains substantially. The accuracy of simulation can depend on discretization method of space and time variables. For space discretization, we are concerned with a well-established domain decomposition method, i.e.  Enhanced Velocity Mixed Finite Element Method (EVMFEM), which provides similar accuracy as the Multiscale Mortar Mixed FEM \cite{wheeler2002enhanced}. EVMFEM is a mass conservative and an efficient domain decomposition method. By using this method, several applications such as single, two-phase flow, bio-remediation simulation and others were considered in \cite{wheeler2002enhanced, thomas2011enhanced}. Recently, an  adaptive mesh refinement strategy, which is based on Enhanced Velocity scheme, has been proposed in the numerical simulations of flow and transport through heterogeneous   porous media \cite{amanbek2017adaptive, singh2017adaptive,  ganis2018adaptive,amanbek2018new}. Such a novel approach demonstrates the efficiency and accuracy of simulation in the heterogeneous porous media by allowing to capture an important features of flow and transport problems. 

A little attention has been given to discrete time setting analysis. Theoretical convergence analysis of EVMFEM has been shown in \cite{thomas2011enhanced} for slightly compressible single phase flow for general continuous in time approximations. However, we could not find the numerical error tests that compare with analytical solution. A few authors have begun to implement time domain decomposition method to be flexible on selection of time step size \cite{singh2018space, singh2018domain, Amanbek2018}. The key idea is to extend the EVMFEM in space  to time discretization by constructing a monolithic system without subdomain iteration.
 
In this paper, we are concerned with the solution of time dependent problem that is discretized by the backward Euler method or the Crank-Nicolson method combined with EVMFEM, which uses the lowest order Raviart-Thomas spaces on non-matching multiple subdomains. In particularly, we focus on deriving a \textit{priori} error estimates for the transient subsurface problems in discrete-time setting. This gives asymptotive behavior of the numerical error for a given mesh size, time step size and others. This analsys allows us to conclude about the convergence and the guarantee of stability of the numerical method. The reader is referred to \cite{thomee1984galerkin,wheeler1973priori} for more different time and space discretization in the transient problems.  We also provide numerical tests of the error estimate.

This paper is organized as follows. In the next section, we describe the slightly compressible flow model formulation as well as give a strong and weak formulation using Enhanced Velocity space. Section 3 is devoted to the error analysis with preliminary projections, definitions and discrete formulation. This analysis carried out for backward Euler scheme and Crank-Nicolson schemes simultaneously in time discretization settings. Numerical results are presented in Section 4; Section 5 concludes the paper.
 
\section{Model Formulation} \label{sec:model}
We describe in this section the slightly compressible flow formulate with initial and boundary conditions. Next, the transient problem is presented in the strong and weak formulations.
 
\subsection{Slightly Compressible Flow Formulation}
Our focus is a single phase and slightly compressible fluid in heterogeneous porous media. The classical mass conservation equation is defined by 
	\begin{equation} \label{eq:2_1}
	\frac{\partial }{\partial t}\left(\phi \rho \right)+ \nabla \cdot (\rho \mathbf{u}) =q \qquad \qquad \text{in} \quad \Omega \times J
	\end{equation}
where  $\Omega \in \mathbb{R}^d( d=1,2$ or $3$), $J=(0,T]$, $d$ is the number of spatial dimensions, $q$ is the source/sink term, $\phi$ is the porosity, $\rho$ is the phase density, and  $\textbf{u}$ is the phase velocity. We remark that a Peacemann correction is used for modeling source/sink terms \cite{peaceman1983interpretation}. 

In slightly compressible fluid, the phase density is given by 
$ \rho=\rho_{ref} e^{C_f(p-p_{ref})} \label{eq:2_2}$, where where, $C_f$ is the fluid compressibility, and $\rho_{ref}$ is the reference density at reference pressure $p_{ref}$. Using Taylor series expansion we obtain $\rho \approx \rho_{ref}(1+C_f(p-p_{ref}))$. 
Then it follows that 
\begin{equation}
	\phi \frac{\partial \rho }{\partial t}=\phi\frac{\partial \rho }{\partial p} \frac{\partial p }{\partial t}=\phi C_1 \frac{\partial p }{\partial t}
\end{equation} for invariant-in-time $\phi$ and for $C_1 = C_f \rho_{ref}$. 
The phase velocity $\textbf{u}$ is defined by Darcy's law as,
\begin{equation} 
\textbf{u} = - \frac{\mathbf{K}}{\mu}\left(\nabla p - \rho \textbf{g} \right),
\label{eq:darcy}
\end{equation}
where, $\mu$ is the viscosity, $\mathbf{K}$ is the permeability (absolute permeability) tensor, $\rho$ is the density of the fluid and $\textbf{g}$ is the gravity vector. Although more general global boundary conditions can also be treated, we restrict ourselves to the following,
\begin{equation}
\begin{aligned}
p = g \quad \text{on} \quad \partial \Omega \times J~.
\label{eq:bc}
\end{aligned}
\end{equation}
Additionally, the initial condition is given by,
\begin{equation}
\begin{aligned}
p(x,0)=p^0(x). 
\label{eq:init}
\end{aligned}
\end{equation}
From now on our analysis focus on a transient (parabolic) problems which might be involved to various applications problems.   

\subsection{Transient problem with EVMFEM}
We start with the strong formulation of the transient (slightly compressible) flow problems governing single phase flow model for pressure $p$ and the velocity $\textbf{u}$, which is also case of slightly compressible single phase flow model: 
\begin{align} 
\textbf{u} &=-\mathbf{K} \nabla p \qquad \text{ in} \quad \Omega \times J,  \label{eq:spa} \\ 
\frac{\partial p}{\partial t}+\nabla \cdot \textbf{u} &=f \qquad \qquad \text{ in} \quad \Omega \times J, \label{eq:spb}  \\ 
p &=g \qquad \qquad \text{ on} \quad \partial \Omega \times J   \label{eq:spc} \\
p &=p_0 \qquad \qquad at \quad  t=0   \label{eq:spd}
\end{align}
where $\Omega \in \mathbb{R}^d( d=2$ or $3$) is multiblock domain, $J=[0, T]$  and $\mathbf{K}$ is a symmetric, uniformly positive definite tensor representing the permeability divided by the viscosity with $L^{\infty}(\Omega)$ components, for some $0<k_{min}<k_{max} < \infty$
\begin{align}
k_{min} \xi ^T \xi \le \xi^T \mathbf{K}(x) \xi \le k_{max} \xi^T \xi \qquad \forall x \in \Omega \quad \forall \xi \in \mathbb{R}^d, d=1,2,3.
\end{align}
The Dirichlet boundary condition is considered for convenience. A weak solution of parabolic Eqns. ($\ref{eq:spa}$) - ($\ref{eq:spd}$) is a pair $\{\mathbf{u}_h, p_h\} :J \rightarrow  \mathbf{V}^*_{h} \times W_h$, i.e. Enhanced Velocity Mixed Finite Element approximation
\begin{align} 
\left(K^{-1}\textbf{u}, \mathbf{v} \right) &=\left(p, \nabla \cdot \mathbf{v} \right)- \langle g, \mathbf{v} \cdot \nu \rangle_{\partial \Omega} \qquad & \forall \mathbf{v} \in \textbf{V} \label{eq:2_4p} \\ 
\left(\frac{\partial p}{\partial t},w\right)+\left(\nabla \cdot \textbf{u}, w \right) &=\left(f,w\right) \qquad \qquad &\forall w \in W \label{eq:2_5p}  
\end{align}
In addition, there is an initial condition
\begin{align} 
\left( p, w \right) \biggr \rvert_{t=0} &=\left(p_0,w\right) \qquad \qquad &\forall w \in W \label{eq:2_4pi} 
\end{align}
We formulate the variational problem in semi-discrete space as:
Find $\{\mathbf{u}_h, p_h\} :J \rightarrow  \mathbf{V}^*_{h} \times W_h$ such that
\begin{align} 
\left(\mathbf{K}^{-1}\textbf{u}_h, \mathbf{v} \right) &=\left(p_h, \nabla \cdot \mathbf{v} \right)- \langle g, \mathbf{v} \cdot \nu \rangle_{\partial \Omega} \quad & \forall \mathbf{v} \in \textbf{V}^*_h  \label{eq:2_6p} \\[10pt]
\left(\frac{\partial p_h}{\partial t},w\right)+\left(\nabla \cdot \textbf{u}_h, w \right) &=\left(f,w\right) \qquad \qquad &\forall w \in W_h\label{eq:2_7p}  
\end{align}
In addition, there is an initial condition
\begin{align} 
\left( p_h, w \right) \biggr \rvert_{t=0} &=\left(p_0,w\right) \qquad \qquad &\forall w \in W_h \label{eq:2_5pi} 
\end{align}
Subtracting Eqns. ($\ref{eq:2_4p}$) - ($\ref{eq:2_5p}$) from Eqns. ($\ref{eq:2_6p}$) - ($\ref{eq:2_7p}$) yields
\begin{align} 
\left(\mathbf{K}^{-1}(\textbf{u}-\textbf{u}_h), \mathbf{v} \right) -\left(p-p_h, \nabla \cdot \mathbf{v} \right)&=0 \qquad & \forall \mathbf{v} \in \textbf{V}^*_h  \label{eq:2_8} \\[10pt]
\left(\frac{\partial}{\partial t}(p-p_h),w\right)+\left(\nabla \cdot (\textbf{u}-\textbf{u}_h), w \right) &=0 \qquad \qquad &\forall w \in W_h   \label{eq:2_9}  
\end{align}
\newpage
\section{Error estimates}
In this section, we start with preliminaries including projections operators and some notations. Using them we present discrete formulations for analysis. Next, we derive auxiliary error estimates and a \textit{priori} error estimate theorems. 

\subsection{Projections}
We shall write a projection and define auxiliary error of pressure and velocity as follows:
\begin{align}
&E^I_p=p-\hat{p}, \; &E^A_p=\hat{p}-p_h , \\
&E^I_{\mathbf{u}}=\mathbf{u}-\Pi^*\mathbf{u}, \; &E^A_{\mathbf{u}}=\Pi^*\mathbf{u}-\mathbf{u}_h.
\end{align}
Note that  $\mathbf{u}-\mathbf{u}_h=E^I_{\mathbf{u}}+E^A_{\mathbf{u}}$, $p-p_h=E^I_p+E^A_p$. We used $\hat{p}$ the $L^2$-projection of $p$ that is defined as 
\begin{equation}\label{eqn:2_10}
\left(p-\hat{p}, w\right)=\left(E^I_p, w\right)=0 \qquad \forall w \in W_h. 
\end{equation}

We know from original work \cite{wheeler2002enhanced} that the projection operator $\Pi^*$ was introduced and was utilized for a \textit{priori} error analysis of elliptic problems. For convenience of the reader, we repeat the relevant and brief definition. Thus, we denote by $\Pi^*$ the projection operator that maps $(H^1(\Omega))^d$ onto $\mathbf{V}^*_h$ that defined locally for any element $T \in \mathcal{T}_h$ and any $\mathbf{q} \in (H^1(T))^d$ such that for all $\mathbf{q} \in (H^1(T))^d$ 
\begin{align}
\langle \Pi^* \mathbf{q} \cdot \nu, 1 \rangle_e = \langle \mathbf{q} \cdot \nu, 1 \rangle_e \label{eq:def_proj_star}
\end{align}
where $e$ is either any edge in 2D (or face in 3D) of $T$ not lying on $\Gamma$  or an edge in 2D (or face in 3D) of a sub-element, $T_k$. Such projection is developed prior to conducting error analysis for a \textit{priori} estimate.  As can be seen in Figure \ref{fig:EVMFEM_Gamma}, $T_k$ has a common edge with the interface grid $\mathcal{T}^{\Gamma}$. According to divergence theorem, we have
\begin{align}
\left(\nabla \cdot (\Pi^*\mathbf{q} -\mathbf{q}), w \right) = 0 \qquad \qquad \forall w \in W_h
\end{align} 

\begin{figure}[!tbp]
	\centering	
	\begin{tikzpicture}[thick,scale=1.5, dot/.style = {outer sep = +0pt, inner sep = +0pt, shape = circle, draw = black, label = {#1}},
	small dot/.style = {minimum size = 1pt, dot = {#1}},
	big dot/.style = {minimum size = 8pt, dot = {#1}},
	line join = round, line cap = round, >=triangle 45
	]
	
	\coordinate (A1) at (0, -1);
	\coordinate (A2) at (0, 1);
	\coordinate (A3) at (2, 1);
	\coordinate (A4) at (2, -1);
	
	\coordinate (B1) at (-2.6,-2);
	\coordinate (B2) at (-2.6, 1.5);
	\coordinate (B3) at (0,1.5);
	\coordinate (B4) at (0,-2);
	
	\coordinate (C1) at (-2.6,0);
	\coordinate (C2) at (0, 0);
	\coordinate (C3) at (2,0);
	
	\coordinate (D1) at (-2.6, -1);
	\coordinate (D2) at (-2.6,1);

	\fill[fill={rgb:orange,1;yellow,2;pink,5}, opacity=0.2] (A1) rectangle (A3); 
	
	\draw[very thick] (A1) -- (A2) -- (A3) -- (A4) -- cycle;
	
	\draw[fill={rgb:orange,1;yellow,2;blue,2},opacity=0.2, very thick] (B1) -- (B4)-- (C2)--(C1);
	\draw[fill={rgb:orange,2;yellow,1;green,1},opacity=0.2, very thick] (C1) -- (C2)-- (B3)--(B2);
	
	\draw[very thick] (B4) -- (B3);
	\draw[very thick] (B3) -- (B2);
	\draw[very thick] (C1) -- (C2);
	\draw[dotted, color =red] (C2) -- (C3);
	\draw[dotted, color =red] (A2) -- (D2);
	\draw[dotted, color =red] (A1) -- (D1);
	\draw[very thick] (B1) -- (B4);
	\draw[very thick] (B1) -- (B2);

	\node[font = \large, color =red] (e1)  at (0, 0.5) { \textbf{$\times$}};
	\node[font = \large, color =red] (e2)  at (0,-0.5) {$\times$};	
	\node[font = \large ] (e3)  at (1, -1) {$\times$};
	\node[font = \large] (e4)  at (2, 0) {$\times$};
	\node[font = \large] (e5)  at (1, 1) {$\times$};
	
	\node [above, font = \large] at (0,-2.7) { $\Gamma_{i,j}$};
	\node [above left, font = \large] at (0, 0.5) { $e_1$};
	\node [above left, font = \large] at (0,-0.5) { $e_2$};
	\node [right, font = \large] at (1, 0.5) { $T_1$};
	\node [right, font = \large] at (1,-0.5) { $T_2$};
	
	\end{tikzpicture}
	\caption{Degrees of freedom for the Enhanced Velocity space.}
	\label{fig:EVMFEM_Gamma}
\end{figure}

For $\mathbf{u} \in H^1(\Omega)$
\begin{equation} \label{eqn:2_11}
\left(\nabla \cdot \left(\Pi^*\mathbf{u}-\mathbf{u}\right), w\right)=\left(\nabla \cdot E^I_{\mathbf{u}}, w\right)=0 \qquad \forall w \in W_h 
\end{equation}

\textbf{Lemma.}
	\begin{equation}\label{eqn:2_12}
	\left(\frac{\partial}{\partial t} E^I_p, w\right)=0 \qquad \forall w \in W_h 
	\end{equation}
\begin{proof}
	$0=\frac{d}{dt}\left(E^I_p, w\right)=\left(\frac{\partial}{\partial t} E^I_p, w\right)+\cancelto{0}{\left( E^I_p, w_t\right)}$, 
	by \ref{eqn:2_10}, since $w_t \in W_h$. 
\end{proof}
Useful  inequalities of projections , see \cite{wheeler2002enhanced}:
\begin{equation} \label{eqn:2_13}
\norm{E^I_p} \le C \norm{p}_r h^r \qquad 0 \le r \le 1,
\end{equation}
\begin{equation} \label{eqn:2_14}
\norm{E^I_{\mathbf{u}}} \le C \norm{\mathbf{u}}_1 h.
\end{equation}
Recall Young's Inequality: for $a,b \ge 0$ 
\begin{equation} \label{eqn:youngineq}
ab \le \frac{1}{2\varepsilon} a^2+\frac{\varepsilon}{2}b^2
\end{equation}
The Inverse Inequality can be given as
\begin{equation}\label{eqn:inverseinequality}
\norm{\nabla \cdot \mathbf{u}_h} \le C h^{-1} \norm{\mathbf{u}_h}
\end{equation}
In this inequality, we have been working under the assumption that $\mathcal{T}_{h,i}$ quasi-uniform rectangular partition of $\Omega_i$. 

\subsection{Definitions}
In this section, we make analysis of discrete in time error estimates. Firstly, some definitions are made: for $\Delta t=\frac{T}{N}$, $N$ is a positive integer, $t_n=n \Delta t$ and for given $\theta \in [0,1]$,
\begin{equation}
f^n=f(x,t_n), \qquad 0 \le n \le N,
\end{equation}
\begin{equation}
f^{n,\theta}=\frac{1}{2}(1+\theta) f^{n+1}+\frac{1}{2}(1-\theta)f^n, \qquad 0 \le n \le N-1.
\end{equation}
Let's make also the following definitions:
\begin{align*}
&\norm{f}_{l^{\infty}(L^2)}=\underset{0 \leq n \leq N}{\max}  \norm{f^n}_{L^2} \\
&\norm{f}_{l^2(L^p)}=\left(\sum_{n=0}^{N-1} \norm{f^{n,\theta}}^2_{L^p} \Delta t \right)^{\frac{1}{2}}.
\end{align*}
We note that the difference can be expressed $t^{n,\theta}- t^{n}=\frac{1}{2}(1+\theta) \Delta t$. Next, using the Taylor series expansion about $t=t^{n,\theta}$, for any sufficiently smooth function $f(t)$, we obtain:
\begin{equation*}
\begin{split}
f^{n+1}=f \biggr \rvert_{t=t^{n,\theta}}+\frac{1}{2}(1-\theta) \Delta t \frac{\partial f}{\partial t} \biggr \rvert_{t=t^{n,\theta}}+\frac{1}{8}(1-\theta)^2 (\Delta t)^2 \frac{\partial^2 f}{\partial t^2} \biggr \rvert_{t=t^{n,\theta}}+ O(\Delta t^3)
\end{split}
\end{equation*}

\begin{equation*}
\begin{split}
f^{n}=f \biggr \rvert_{t=t^{n,\theta}}-\frac{1}{2}(1+\theta) \Delta t \frac{\partial f}{\partial t} \biggr \rvert_{t=t^{n,\theta}}+\frac{1}{8}(1+\theta)^2 (\Delta t)^2 \frac{\partial^2 f}{\partial t^2} \biggr \rvert_{t=t^{n,\theta}}+ O(\Delta t^3)
\end{split}
\end{equation*}

After multiplying the first equation by $\frac{1}{2}(1+\theta)$ and the second equation by $\frac{1}{2}(1-\theta)$ and then summing them, we obtain
\begin{equation*}
\begin{split}
f^{n,\theta}=f \biggr \rvert_{t=t^{n,\theta}}+\frac{1}{8} (\Delta t)^2 (1+\theta)(1-\theta)\frac{\partial^2 f}{\partial t^2} \biggr \rvert_{t=t^{n,\theta}}+ O(\Delta t^3)
\end{split}
\end{equation*}
Note that if $\theta =1 $ then $f^{n, \theta}=f \biggr \rvert_{t=t^{n,\theta}} + O(\Delta t^3)$. In addition, we can get second order approximation of $\Delta t$, details in \cite{riviere2000discontinuous} : 
$ p(\mathbf{x},t^{n,\theta}) \approx p^{n, \theta} \text{  and  } u(\mathbf{x},t^{n,\theta}) \approx u^{n, \theta}.$
According to Taylor series expansion \cite{riviere2000discontinuous}, we obtain
\begin{equation} \label{eqn:3_3}
\frac{p^{n+1}-p^n}{\Delta t}=p_t(x,t^{n,\theta})+\rho^{p,n,\theta}, \qquad \forall x \in \Omega,
\end{equation}
where $\rho^{p,n,\theta}$ depends on time-derivatives of $p$ and $\Delta t$
\begin{equation} \label{eqn:3_3a}
\norm{\rho^{p,n,\theta}} \le
\begin{cases*}
C_1 \Delta t  \norm{p_{tt}}_{L^{\infty}((t^n,t^{n+1}),H^1)}, \qquad  if \quad \theta =1, \\
C_2 \Delta t^2 \norm{p_{ttt}}_{L^{\infty}((t^n,t^{n+1}),H^1)} , \qquad  if \quad \theta =0,
\end{cases*}
\end{equation}
so $\norm{\rho^{p,n,\theta}}=\mathcal{O}(\Delta t^{2-\theta})$.
\subsection{Discrete formulation}
We formulate variational form in semi-discrete space as: Find $\{\mathbf{u}_h, p_h\} :J \rightarrow  \mathbf{V}^*_{h} \times W_h$ such that
\begin{align} 
\left(\frac{\partial p_h}{\partial t},w\right)+\left(\nabla \cdot \textbf{u}_h, w \right) &=l_{1}(w)  \qquad \qquad &\forall w \in W_h \label{eq:3_6} \\[10pt]
\left(\mathbf{K}^{-1}\textbf{u}_h, \mathbf{v} \right) -\left(p_h, \nabla \cdot \mathbf{v} \right)&=l_{2}(\mathbf{v})\quad & \forall \mathbf{v} \in \textbf{V}^*_h \label{eq:3_7}    
\end{align}

In addition, there is an initial condition
\begin{align} 
\left( p_h, w \right) \biggr \rvert_{t=0} &=\left(p_0,w\right) \qquad \qquad &\forall w \in W_h \label{eq:3_5pi} 
\end{align}

where $l_1$ and $l_2$ are bounded linear functionals, i.e.
\begin{equation*}
\begin{split}
&l_{1}(w)=\left(f,w\right), \\
&l_{2}(\mathbf{v})=- \langle g, \mathbf{v} \cdot \nu \rangle_{\partial \Omega}.
\end{split}
\end{equation*}

With this definitions, Eqns. ($\ref{eq:3_6}$) - ($\ref{eq:3_7}$) become as: Find  $\{\mathbf{u}^{n, \theta}_h, p^{n, \theta}_h\} \in \mathbf{V}^*_{h} \times W_h$, $n=1,2, ..., N-1$, such that
\begin{align} 
&\left(\frac{p^{n+1}_h- p^{n}_h}{\Delta t},w\right)+\left(\nabla \cdot \textbf{u}^{n, \theta}_h, w \right) =l^{n, \theta}_{1}(w) \qquad \qquad &\forall w \in W_h\label{eq:3_6p}  \\[10pt]
&\left(\mathbf{K}^{-1}\textbf{u}^{n, \theta}_h, \mathbf{v} \right) -\left(p^{n, \theta}_h, \nabla \cdot \mathbf{v} \right)=l^{n, \theta}_{2}(\mathbf{v})  \quad  &\forall \mathbf{v} \in \textbf{V}^*_h  \label{eq:3_7p} 
\end{align}
Note that if $\theta =1$ then the time discretization is the backward Euler(Implicit) method, and if $\theta=0$ then the Crank-Nicolson scheme.

We consider true solution $\mathbf{u} \in L^2\left(J,\mathbf{V}\right) $ and $p_h \in H^1\left(J,W\right)$ of Eqns. ($\ref{eq:2_4p}$) and ($\ref{eq:2_5p}$) at time $t=t^{n, \theta}$ in the continuous in time with spatially discrete scheme. We used Eqn. ($\ref{eqn:3_3}$) and additional remark related to the Taylor series expansion in order to obtain the following equations with at least order of $\mathcal{O}(\Delta t)$:
\begin{align} 
&\left(\frac{ p^{n+1}-p^{n}}{\Delta t},w\right)+\left(\nabla \cdot \textbf{u}^{n,\theta}, w \right) =l_1(w)+\left( \rho^{p,n,\theta},w\right) \qquad \forall w \in W \label{eq:3_8} \\
&\left(K^{-1}\textbf{u}^{n,\theta}, \mathbf{v} \right) -\left(p^{n,\theta}, \nabla \cdot \mathbf{v} \right)=l_2(\mathbf{v})\qquad \qquad  \forall \mathbf{v} \in \textbf{V} \label{eq:3_9}   
\end{align}

We approximate the vector integrals type $\left( \mathbf{v}, \mathbf{q} \right)_{T, M}$ by trapezoidal-midpoint quadrature rules and $\left(\mathbf{K}^{-1}\mathbf{q}, \mathbf{v}\right)_{T}$ by trapezoidal  quadrature rules respectively. In \cite{russell1983finite}, the equivalence between finite volume methods and the mixed finite element method was established for special quadrature rule for $\mathbf{K}$ diagonal tensor and using the lowest-order Raviart Thomas spaces on rectangles. We emphasize that the EVMFEM with special quadrature and velocity elimination in the discrete system can be reduced to well-known a cell-centered finite difference method. 

For each time step we use the Newton method to solve the system, in case of the slightly compressible flow: the fluid compressibility $C_{f}$ term brings us to a nonlinear system. That is why consideration of it would be beneficial for nonlinear problems in the future.

\subsection{Analysis}
We first derive the bounds of auxiliary error terms.

\begin{thm}[Auxiliary error estimate] \label{thm:3_1}
	For the velocity $\mathbf{u_h}$ and pressure $p_h$ of the mixed method spaces $\textbf{V}^*_h \times W_h $ satisfying equations ($\ref{eq:3_6p}$) - ($\ref{eq:3_7p}$), assume $\Delta t$ is sufficiently small and positive, $\mathbf{K}$ is uniformly positive definite and sufficient regularity of true solution in equations (\ref{eq:spa})-(\ref{eq:spd}). Then, there exist a constant $C$ such that
	\begin{equation}
	\norm{E_{\mathbf{u}}^{A}}^2_{l^{2}(L^2)} + \norm{E_p^{A}}^2_{l^{\infty}(L^2)} \le C \left( h^2+h+\Delta t^{2r} \right)
	\end{equation}
	where $C=C(T, \mathbf{K}, \mathbf{u}, p)$ and 
	\begin{equation*}
	r=
	\begin{cases*}
	1, \qquad  if \quad \theta =1 \\
	2, \qquad  if \quad \theta =0
	\end{cases*}
	\end{equation*}
\end{thm}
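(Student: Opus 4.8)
The plan is to run a standard mixed-method energy argument on the auxiliary errors, using the three orthogonality properties of the projections to purge the interpolation errors from the principal terms. First I would subtract the discrete scheme (\ref{eq:3_6p})--(\ref{eq:3_7p}) from the consistency relations (\ref{eq:3_8})--(\ref{eq:3_9}) and insert the splittings $p-p_h=E^I_p+E^A_p$ and $\mathbf{u}-\mathbf{u}_h=E^I_{\mathbf{u}}+E^A_{\mathbf{u}}$. Three cancellations then clear the interpolation part: the discrete time difference of $E^I_p$ tested against $w\in W_h$ vanishes because $(E^{I,k}_p,w)=0$ at each level by (\ref{eqn:2_10}) (the discrete analogue of the Lemma (\ref{eqn:2_12})); the term $(\nabla\cdot E^I_{\mathbf{u}},w)$ vanishes by (\ref{eqn:2_11}); and $(E^{I,n,\theta}_p,\nabla\cdot\mathbf{v})$ vanishes since $\nabla\cdot\mathbf{v}\in W_h$. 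What remains are two error equations for $E^A_p,E^A_{\mathbf{u}}$ driven only by $(\rho^{p,n,\theta},w)$ and by $(\mathbf{K}^{-1}E^{I,n,\theta}_{\mathbf{u}},\mathbf{v})$.

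Next I would test the reduced mass-balance equation with $w=E^{A,n,\theta}_p$ and the reduced Darcy equation with $\mathbf{v}=E^{A,n,\theta}_{\mathbf{u}}$ and add them, so that the two couplings $(\nabla\cdot E^{A,n,\theta}_{\mathbf{u}},E^{A,n,\theta}_p)$ cancel. The velocity term becomes $(\mathbf{K}^{-1}E^{A,n,\theta}_{\mathbf{u}},E^{A,n,\theta}_{\mathbf{u}})\ge k_{max}^{-1}\|E^{A,n,\theta}_{\mathbf{u}}\|^2$ by uniform positive-definiteness of $\mathbf{K}$. For the time term I would treat both schemes together through the algebraic identities $(a-b,\tfrac12(a+b))=\tfrac12(\|a\|^2-\|b\|^2)$ when $\theta=0$ and $(a-b,a)\ge\tfrac12(\|a\|^2-\|b\|^2)$ when $\theta=1$, which give in either case $\big(\tfrac{E^{A,n+1}_p-E^{A,n}_p}{\Delta t},E^{A,n,\theta}_p\big)\ge\tfrac{1}{2\Delta t}\big(\|E^{A,n+1}_p\|^2-\|E^{A,n}_p\|^2\big)$. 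The right-hand side is then controlled by Cauchy--Schwarz and Young's inequality (\ref{eqn:youngineq}), the weight on $\|E^{A,n,\theta}_{\mathbf{u}}\|^2$ chosen small enough to be absorbed by the coercivity term.

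I would then multiply by $2\Delta t$ and sum over $n$. Telescoping leaves $\|E^{A,m}_p\|^2$ plus $\sum\Delta t\,\|E^{A,n,\theta}_{\mathbf{u}}\|^2$ on the left, and the initial term drops because $E^{A,0}_p=0$: both $\hat p^0$ and $p^0_h$ equal the $L^2$-projection of $p_0$ by (\ref{eqn:2_10}) and (\ref{eq:3_5pi}), while $E^{A,0}_p\in W_h$. Bounding $\|E^{A,n,\theta}_p\|^2\le\tfrac{1+\theta}{2}\|E^{A,n+1}_p\|^2+\tfrac{1-\theta}{2}\|E^{A,n}_p\|^2$ re-expresses the residual pressure term as a sum of $\|E^{A,k}_p\|^2$; its $k=m$ contribution is absorbed into the left side provided $\Delta t$ is small (this is precisely where the smallness hypothesis is used), after which the discrete Gronwall inequality removes the remaining $\sum\Delta t\,\|E^{A,n}_p\|^2$ at the cost of a factor $\exp(CT)$. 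Taking the maximum over $m$ produces the $l^\infty(L^2)$ pressure norm and the $l^2(L^2)$ velocity norm on the left.

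Finally I would insert the data bounds: the consistency residual gives $\sum\Delta t\,\|\rho^{p,n,\theta}\|^2\le C\Delta t^{2r}$ by (\ref{eqn:3_3a}) (with $r=2-\theta$), and the velocity interpolation term gives $\sum\Delta t\,\|E^{I,n,\theta}_{\mathbf{u}}\|^2\le Ch^2$ by (\ref{eqn:2_14}). The remaining $O(h)$ contribution in the statement is the genuinely method-specific piece: it stems from the Enhanced Velocity projection $\Pi^*$ acting across the non-matching interface $\Gamma$, where flux matching on the sub-element faces $T_k$ (Figure~\ref{fig:EVMFEM_Gamma}) degrades the approximation by a half order. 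Controlling this interface term is the main obstacle, and it is where the inverse inequality (\ref{eqn:inverseinequality}) enters, trading a factor $h^{-1}$ against the $H(\mathrm{div})$-regularity of $E^A_{\mathbf{u}}$ as in \cite{wheeler2002enhanced}; the telescoping and Gronwall steps are otherwise routine. Collecting the three contributions yields the asserted bound $C(h^2+h+\Delta t^{2r})$.
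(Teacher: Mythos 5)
Your overall strategy is the paper's strategy: form the error equations, test with $w=E_p^{A\,n,\theta}$ and $\mathbf{v}=E_{\mathbf{u}}^{A\,n,\theta}$, cancel the coupling terms, use the lower bound $\bigl(\tfrac{E_p^{A\,n+1}-E_p^{A\,n}}{\Delta t},E_p^{A\,n,\theta}\bigr)\ge\tfrac{1}{2\Delta t}\bigl(\norm{E_p^{A\,n+1}}^2-\norm{E_p^{A\,n}}^2\bigr)$, multiply by $2\Delta t$, telescope, absorb, and apply discrete Gronwall. Those parts are fine. But there is one genuine error, and it sits exactly at the point that makes this theorem specific to the Enhanced Velocity method: you claim that $\left(E_p^{I\,n,\theta},\nabla\cdot\mathbf{v}\right)$ vanishes for $\mathbf{v}\in\mathbf{V}^*_h$ because $\nabla\cdot\mathbf{v}\in W_h$. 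That inclusion fails for the Enhanced Velocity space. On elements adjacent to the interface $\Gamma$, the velocity degrees of freedom live on the sub-elements $T_k$ (Figure \ref{fig:EVMFEM_Gamma}), so $\nabla\cdot\mathbf{v}$ is piecewise constant on the $T_k$ but not constant on the element carrying $W_h$; hence the $L^2$-orthogonality (\ref{eqn:2_10}) does not kill this term there. The paper keeps it as the term $\mathbb{T}_3$, splits it as in (\ref{eqn:2_20a}) so that only the interface strip $\Omega^*$ survives, and bounds it by combining $\norm{E_p^{I\,n,\theta}}_{\Omega^*}\le Ch\norm{p^{n,\theta}}_{1,\Omega^*}$, the inverse inequality (\ref{eqn:inverseinequality}), and $|\Omega^*|\le Ch$; this is the sole source of the $O(h)$ term in the statement.

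Your final paragraph does gesture at an interface term controlled by the inverse inequality, but it cannot be reconciled with your first paragraph: having cancelled $\left(E_p^{I\,n,\theta},\nabla\cdot E_{\mathbf{u}}^{A\,n,\theta}\right)$, the only remaining right-hand-side terms are $\left(\mathbf{K}^{-1}E_{\mathbf{u}}^{I\,n,\theta},E_{\mathbf{u}}^{A\,n,\theta}\right)$ and $\left(\rho^{p,n,\theta},E_p^{A\,n,\theta}\right)$, which by (\ref{eqn:2_14}) and (\ref{eqn:3_3a}) give only $O(h^2+\Delta t^{2r})$ and require no inverse inequality; your argument as written would therefore "prove" a stronger bound than the theorem, via a false cancellation. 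You also misattribute the half-order loss to the velocity projection $\Pi^*$ degrading near $\Gamma$ — the paper's estimate (\ref{eqn:2_14}) for $E^I_{\mathbf{u}}$ is full first order globally; the loss comes from the pressure interpolation error paired with $\nabla\cdot E_{\mathbf{u}}^{A}$ on the thin strip $\Omega^*$. To repair the proof, retain $\left(E_p^{I\,n,\theta},\nabla\cdot E_{\mathbf{u}}^{A\,n,\theta}\right)_{\Omega^*}$ and estimate it as the paper does.
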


\begin{proof}

Subtracting Eqns. ($\ref{eq:3_8})-(\ref{eq:3_9}$) from Eqns. ($\ref{eq:3_6p})-(\ref{eq:3_7p}$) respectively yields

\begin{align} 
&\left(\frac{p^{n+1}-p^{n+1}_h- \left(p^{n}-p^{n}_h\right)}{\Delta t},w\right)+\left(\nabla \cdot \left(\textbf{u}^{n, \theta}-\textbf{u}^{n, \theta}_h\right), w \right) =\left( \rho^{p,n,\theta},w\right) \qquad \qquad \forall w \in W_h\label{eq:3_10}  \\[1 em]
&\left(K^{-1}\left(\textbf{u}^{n, \theta}-\textbf{u}^{n, \theta}_h\right), \mathbf{v} \right) -\left(p^{n, \theta}-p^{n, \theta}_h, \nabla \cdot \mathbf{v} \right)=0  \quad  \forall \mathbf{v} \in \textbf{V}^*_h  \label{eq:3_11}. 
\end{align}
Take $\mathbf{v}=\Pi^*\mathbf{u}^{n,\theta}-\mathbf{u}^{n,\theta}_h=E_{\mathbf{u}}^{A \; n, \theta}$ and $w=E_p^{A \; n, \theta}$ in ($\ref{eq:3_11}$) and ($\ref{eq:3_10}$) respectively. 
\begin{equation*}
\begin{split}
&\left(\frac{\left(E_p^{I \; n+1}+E_p^{A \; n+1} \right)- \left( E_p^{I \; n}+E_p^{A \; n} \right)}{\Delta t},E_p^{I \; n, \theta}\right) \\[10 pt]
&+\left(\nabla \cdot \left(E_{\mathbf{u}}^{I \; n, \theta}+E_{\mathbf{u}}^{A \; n, \theta}\right), E_p^{I \; n, \theta} \right) =\left( \rho^{p,n,\theta},E_p^{A \; n, \theta}\right)
\end{split}
\end{equation*}
\begin{equation*}
\begin{split}
\left(\mathbf{K}^{-1}\left(E_{\mathbf{u}}^{I \; n, \theta}+E_{\mathbf{u}}^{A \; n, \theta}\right), E_{\mathbf{u}}^{A \; n, \theta} \right) -\left(E_p^{I \; n, \theta}+E_p^{A \; n, \theta}, \nabla \cdot E_{\mathbf{u}}^{A \; n, \theta} \right)=0
\end{split}
\end{equation*}
After adding them, we can rewrite as
\begin{align*}
&\left(\frac{\left(E_p^{I \; n+1}+E_p^{A \; n+1} \right)- \left( E_p^{I \; n}+E_p^{A \; n} \right)}{\Delta t},E_p^{A \; n, \theta}\right)+\left(\nabla \cdot \left(E_{\mathbf{u}}^{I \; n, \theta}+E_{\mathbf{u}}^{A \; n, \theta}\right), E_p^{A \; n, \theta} \right) +\\[10pt]
&+\left(\mathbf{K}^{-1}\left(E_{\mathbf{u}}^{I \; n, \theta}+E_{\mathbf{u}}^{A \; n, \theta}\right), E_{\mathbf{u}}^{A \; n, \theta} \right) -\left(E_p^{I \; n, \theta}+E_p^{A \; n, \theta}, \nabla \cdot E_{\mathbf{u}}^{A \; n, \theta} \right)=\\[10pt]
&=\left(\frac{E_p^{I \; n+1}+E_p^{A \; n+1} }{\Delta t},E_p^{A \; n}\right)-  \left(\frac{  E_p^{I \; n}+E_p^{A \; n} }{\Delta t},E_p^{A \; n, \theta}\right)\\[10pt]
&+\left(\nabla \cdot E_{\mathbf{u}}^{I \; n, \theta}, E_p^{A \; n, \theta} \right) 
+\cancelto{0}{\left(\nabla \cdot E_{\mathbf{u}}^{A \; n, \theta}, E_p^{A \; n, \theta} \right)}\\[10pt]
&+\left(\mathbf{K}^{-1}\left(E_{\mathbf{u}}^{I \; n, \theta}+E_{\mathbf{u}}^{A \; n, \theta}\right), E_{\mathbf{u}}^{A \; n, \theta} \right) -\left(E_p^{I \; n, \theta}, \nabla \cdot E_{\mathbf{u}}^{A \; n, \theta} \right)
-\cancelto{0}{\left(E_p^{A \; n, \theta}, \nabla \cdot E_{\mathbf{u}}^{A \; n, \theta} \right)} \\[10pt]
&=\cancelto{0, \; by \; \ref{eqn:2_10}}{\left(\frac{E_p^{I \; n+1} }{\Delta t},E_p^{A \; n, \theta}\right)}\qquad + \qquad
\left(\frac{E_p^{A \; n+1} }{\Delta t},E_p^{A \; n, \theta}\right) \\[10pt]
&-\cancelto{0, \; by \; \ref{eqn:2_10}}{\left(\frac{  E_p^{I \; n} }{\Delta t},E_p^{A \; n, \theta}\right)}
-  \left(\frac{ E_p^{A \; n} }{\Delta t},E_p^{A \; n, \theta}\right) 
+\cancelto{0, \; by \; \ref{eqn:2_11}}{\left(\nabla \cdot E_{\mathbf{u}}^{I \; n, \theta}, E_p^{A \; n, \theta} \right)} \\[10pt] 
&+\left(\mathbf{K}^{-1}\left(E_{\mathbf{u}}^{I \; n, \theta}+E_{\mathbf{u}}^{A \; n, \theta}\right), E_{\mathbf{u}}^{A \; n, \theta} \right) -\left(E_p^{I \; n, \theta}, \nabla \cdot E_{\mathbf{u}}^{A \; n, \theta} \right)\\[13pt]
&=\left(\frac{E_p^{A \; n+1}-E_p^{A \; n} }{\Delta t},E_p^{A \; n, \theta}\right)
+\left(\mathbf{K}^{-1}\left(E_{\mathbf{u}}^{I \; n, \theta}+E_{\mathbf{u}}^{A \; n, \theta}\right), E_{\mathbf{u}}^{A \; n, \theta} \right) -\left(E_p^{I \; n, \theta}, \nabla \cdot E_{\mathbf{u}}^{A \; n, \theta} \right)\\[10pt]
&=\left(\rho^{p,n,\theta},E_p^{A \; n, \theta}\right)
\end{align*}

\begin{equation}\label{eq:3_12}
\begin{split}
&\left(\frac{E_p^{A \; n+1}-E_p^{A \; n}  }{\Delta t},E_p^{A \; n, \theta}\right)
+ \left(\mathbf{K}^{-1} E_{\mathbf{u}}^{A \; n, \theta}, E_{\mathbf{u}}^{A \; n, \theta} \right) \\[15pt] 
&=-\left(\mathbf{K}^{-1} E_{\mathbf{u}}^{I \; n, \theta}, E_{\mathbf{u}}^{A \; n, \theta} \right)+
\left(E_p^{I \; n, \theta}, \nabla \cdot E_{\mathbf{u}}^{A \; n, \theta} \right)+
\left( \rho^{p,n,\theta},E_p^{A \; n, \theta}\right)
\end{split}
\end{equation}

\begin{align*}
&\left(\frac{E_p^{A \; n+1}-E_p^{A \; n}  }{\Delta t},E_p^{A \; n, \theta}\right)=\left(\frac{E_p^{A \; n+1}-E_p^{A \; n}  }{\Delta t},\frac{1+\theta}{2}E_p^{A \; n+1, \theta}+\frac{1-\theta}{2}E_p^{A \; n, \theta}\right) \\[10pt]
&=\frac{1+\theta}{2 \Delta t}\left(E_p^{A \; n+1}-E_p^{A \; n} ,E_p^{A \; n+1}\right)+
\frac{1-\theta}{2 \Delta t}\left(E_p^{A \; n+1}-E_p^{A \; n} ,E_p^{A \; n}\right) \\[10pt]
&=\frac{1+\theta}{2 \Delta t}\left(E_p^{A \; n+1} ,E_p^{A \; n+1}\right)-
\frac{1+\theta}{2 \Delta t}\left(E_p^{A \; n} ,E_p^{A \; n+1}\right)\\[10pt]
&+\frac{1-\theta}{2 \Delta t}\left(E_p^{A \; n+1} ,E_p^{A \; n}\right) -\frac{1-\theta}{2 \Delta t}\left(E_p^{A \; n} ,E_p^{A \; n}\right) \\[10pt]
&=\frac{1+\theta}{2 \Delta t}\norm{E_p^{A \; n+1}}^2-
\frac{2\theta}{2\Delta t}\left(E_p^{A \; n} ,E_p^{A \; n+1}\right)-
\frac{1-\theta}{2 \Delta t}\norm{E_p^{A \; n}}^2 \\[10pt]
&=\frac{1}{2 \Delta t}\norm{E_p^{A \; n+1}}^2-\frac{1}{2 \Delta t}\norm{E_p^{A \; n}}^2
+\frac{\theta}{2 \Delta t}\left(\norm{E_p^{A \; n+1}}^2-2\left(E_p^{A \; n} ,E_p^{A \; n+1}\right)+\norm{E_p^{A \; n}}^2\right) \\[10pt]
&=\frac{1}{2 \Delta t}\left(\norm{E_p^{A \; n+1}}^2-\norm{E_p^{A \; n}}^2\right)
+\underbrace{\frac{\theta}{2 \Delta t}\left(\norm{E_p^{A \; n+1}}-\norm{E_p^{A \; n}}\right)^2}_{\ge 0} \ge \\[10pt]
&\ge \frac{1}{2 \Delta t}\left(\norm{E_p^{A \; n+1}}^2-\norm{E_p^{A \; n}}^2\right)  	
\end{align*}
It follows immediately that 
\begin{equation} \label{eq:3_13}
\left(\frac{E_p^{A \; n+1}-E_p^{A \; n}  }{\Delta t},E_p^{A \; n, \theta}\right)
\ge \frac{1}{2 \Delta t}\left(\norm{E_p^{A \; n+1}}^2-\norm{E_p^{A \; n}}^2\right)  	
\end{equation}
By using $\ref{eq:3_13}$, multiply by $2 \Delta t$ and sum from $0$ to $N-1$ in Equation $\ref{eq:3_12}$.

\begin{equation*}
\begin{split}
&\sum^{N-1}_{n=0} \left(\norm{E_p^{A \; n+1}}^2-\norm{E_p^{A \; n}}^2\right) 
+ 2\sum^{N-1}_{n=0} \left(\mathbf{K}^{-1} E_\mathbf{u}^{A \; n, \theta}, E_\mathbf{u}^{A \; n, \theta} \right) \Delta t \le \\[10pt]
&\le -2\sum^{N-1}_{n=0} \left(\mathbf{K}^{-1} E_{\mathbf{u}}^{I \; n, \theta}, E_{\mathbf{u}}^{A \; n, \theta} \right) \Delta t+
2\sum^{N-1}_{n=0} \left(E_p^{I \; n, \theta}, \nabla \cdot E_{\mathbf{u}}^{A \; n, \theta} \right) \Delta t+
2\sum^{N-1}_{n=0} \left(\rho^{p,n,\theta},E_p^{A \; n, \theta}\right) \Delta t
\end{split}
\end{equation*}

\begin{equation*}
\begin{split}
&\left(\norm{E_p^{A \; N}}^2-\cancelto{0, \; by \; \ref{eq:3_5pi}}{\norm{E_p^{A \; 0}}^2}\right) 
\qquad  \quad + \sum^{N-1}_{n=0} \left(\mathbf{K}^{-1} E_{\mathbf{u}}^{A \; n, \theta}, E_{\mathbf{u}}^{A \; n, \theta} \right) \Delta t  \\[15pt]
&\le \underbrace{-2\sum^{N-1}_{n=0} \left(\mathbf{K}^{-1} E_{\mathbf{u}}^{I \; n, \theta}, E_{\mathbf{u}}^{A \; n, \theta} \right) \Delta t}_{\mathbb{T}_1}+
\underbrace{2\sum^{N-1}_{n=0} \left( \rho^{p,n,\theta},E_p^{A \; n, \theta}\right) \Delta t }_{\mathbb{T}_2}+
\underbrace{2\sum^{N-1}_{n=0} \left(E_p^{I \; n, \theta}, \nabla \cdot E_{\mathbf{u}}^{A \; n, \theta} \right) \Delta t}_{\mathbb{T}_3}
\end{split}
\end{equation*} 
\begin{align*}
\mathbb{T}_1 &=-2\sum^{N-1}_{n=0} \left(\mathbf{K}^{-1} E_{\mathbf{u}}^{I \; n, \theta}, E_{\mathbf{u}}^{A \; n, \theta} \right) \Delta t \\[10pt] &\underbrace{\le}_{Holder's \; ineq.} 2\sum^{N-1}_{n=0} \norm{\mathbf{K}^{-1} E_{\mathbf{u}}^{I \; n, \theta}} \norm{E_{\mathbf{u}}^{A \; n, \theta}} \Delta t \\[10pt]
&\underbrace{\le}_{Young's \; ineq.} \frac{1}{\varepsilon k^2_{min} }\sum^{N-1}_{n=0} \norm{ E_{\mathbf{u}}^{I \; n, \theta}}^2 \Delta t + \varepsilon \sum^{N-1}_{n=0} \norm{E_{\mathbf{u}}^{A \; n, \theta}}^2 \Delta t
\end{align*}
We use the Holder inequality and the Young inequality to get
\begin{align*}
\mathbb{T}_2=&2\sum^{N-1}_{n=0} \left( \rho^{p,n,\theta},E_p^{A \; n, \theta}\right)\Delta t \le 2\sum^{N-1}_{n=0} \norm{ \rho^{p,n,\theta}}\norm{E_p^{A \; n, \theta} } \Delta t   \\[10pt]
&\le \sum^{N-1}_{n=0} \norm{E_p^{A \; n }}^2 \Delta t + \sum^{N-1}_{n=0} \norm{\rho^{p,n,\theta}}^2 \Delta t.   
\end{align*}
We should note that 
\begin{equation}\label{eqn:2_20a}
\begin{split}
\left(E^I_p, \nabla \cdot E^A_u \right)_{\Omega}=\left(E^I_p, \nabla \cdot E^A_u \right)_{\Omega^*}+\cancelto{0}{\left(E^I_p, \nabla \cdot E^A_u \right)_{\Omega \backslash \Omega^*}}=\left(E^I_p, \nabla \cdot E^A_u \right)_{\Omega^*}
\end{split}
\end{equation}
since $\nabla \cdot E^A_u  \biggr \rvert_{\Omega \backslash \Omega^*} \in W_h$ and the property $\ref{eqn:2_10}$.
\begin{align*}
\mathbb{T}_3 &=2\sum^{N-1}_{n=0} \left(E_p^{I \; n, \theta}, \nabla \cdot E_{\mathbf{u}}^{A \; n, \theta} \right)_{\Omega} \Delta t \\[10pt] &=2\sum^{N-1}_{n=0} \left(E_p^{I \; n, \theta}, \nabla \cdot E_{\mathbf{u}}^{A \; n, \theta} \right)_{\Omega^*} \Delta t  \\[10pt] 
&\le 2\sum^{N-1}_{n=0} \norm{E_p^{I \; n, \theta}}_{\Omega^*} \norm{\nabla \cdot E_{\mathbf{u}}^{A \; n, \theta}}_{\Omega^*} \Delta t \le \\[10pt] 
&\le 2C \sum^{N-1}_{n=0} \left(\frac{1+\theta}{2}\norm{p^{n+1}}_{1, \Omega^*}+\frac{1-\theta}{2}\norm{p^{n}}_{1, \Omega^*} \right)h \norm{ E_{\mathbf{u}}^{A \; n, \theta}}_{\Omega} h^{-1} \Delta t \\[10pt]
&\le C \sum^{N-1}_{n=0} \left(\frac{1+\theta}{2}\norm{p^{n+1}}_{1, \Omega^*}+\frac{1-\theta}{2}\norm{p^{n}}_{1, \Omega^*} \right)^2 \Delta t + \varepsilon \sum^{N-1}_{n=0} \norm{ E_{\mathbf{u}}^{A \; n, \theta}}^2  \Delta t  
\end{align*}
\begin{remark}
	We used the following properties:
	\begin{align*}
	\norm{E_p^{I \; n, \theta}}_{\Omega^*}&=\norm{\frac{1+\theta}{2}E_p^{I \; n+1}+\frac{1-\theta}{2}E_p^{I \; n}}_{\Omega^*} \\[10pt]  
	&\le \frac{1+\theta}{2}\norm{E_p^{I \; n+1}}_{\Omega^*}+\frac{1-\theta}{2}\norm{E_p^{I \; n}}_{\Omega^*}  \\[10pt] 
	&\le \frac{1+\theta}{2}\norm{p^{n+1}}_{1, \Omega^*} h+\frac{1-\theta}{2}\norm{p^{n}}_{1, \Omega^*} h \\[10pt] 
	&\le \left(\frac{1+\theta}{2}\norm{p^{n+1}}_{1, \Omega^*}+\frac{1-\theta}{2}\norm{p^{n}}_{1, \Omega^*} \right)h
	\end{align*}
	and
	\begin{align*}
	\norm{\nabla \cdot E_{\mathbf{u}}^{A \; n, \theta}}_{\Omega^*} \le C \norm{ E_{\mathbf{u}}^{A \; n, \theta}}_{\Omega^*} h^{-1}
	\le C \norm{ E_{\mathbf{u}}^{A \; n, \theta}}_{\Omega} h^{-1}
	\end{align*}
\end{remark}
Next, we know that
\begin{equation*}
\begin{split}
\left(\mathbf{K}^{-1} E_{\mathbf{u}}^{A \; n, \theta}, E_{\mathbf{u}}^{A \; n, \theta} \right) \ge \frac{1}{k_{max}} \norm{E_{\mathbf{u}}^{A \; n, \theta} }^2
\end{split}
\end{equation*}

Therefore,
\begin{equation*}
\begin{split}
&\frac{1}{2} \norm{E_p^{A \; N}}^2+\left[\frac{2}{k_{max}}-2\varepsilon \right]  \sum^{N-1}_{n=0} \norm{E_{\mathbf{u}}^{A \; n, \theta} }^2 \Delta t  \le \\[10pt] 
&\le C \sum^{N-1}_{n=0} \norm{ E_{\mathbf{u}}^{I \; n, \theta}}^2 \Delta t+
C\sum^{N-1}_{n=0} \norm{\rho^{p,n,\theta}}^2 \Delta t  + C \sum^{N-1}_{n=0} \norm{E_p^{A \; n }}^2 \Delta t+ \\[10pt] 
&+\frac{1}{2} \norm{ E_p^{A \; N}}^2 \Delta t +
+ C \sum^{N-1}_{n=0} \left(\frac{1+\theta}{2}\norm{p^{n+1}}_{1, \Omega^*}+\frac{1-\theta}{2}\norm{p^{n}}_{1, \Omega^*} \right)^2 \Delta t 
\end{split}
\end{equation*}

We can multiply by 2 and make $\varepsilon$ small enough in order to have LHS with positive coefficients. Later take minimum and divide both side of inequality.

\begin{align*}
&\sum^{N-1}_{n=0} \norm{E_{\mathbf{u}}^{A \; n, \theta} }^2 \Delta t +\norm{E_p^{A \; N}}^2 \le \\[10pt] 
&\le  C \Delta t \left[ \sum^{N-1}_{n=0} \norm{ E_{\mathbf{u}}^{I \; n, \theta}}^2+\sum^{N-1}_{n=0} \left(\frac{1+\theta}{2}\norm{p^{n+1}}_{1, \Omega^*}+\frac{1-\theta}{2}\norm{p^{n}}_{1, \Omega^*} \right)^2  \right] \\[10pt] 
&+ C \sum^{N-1}_{n=0} \norm{E_p^{A \; n} }^2 \Delta t  + C\sum^{N-1}_{n=0} \norm{\rho^{p,n,\theta}}^2 \Delta t. 
\end{align*}
We are thus apply the discrete Gronwall lemma, for sufficiently small $\Delta t$, to obtain:
\begin{equation*}
\begin{split}
&\sum^{N-1}_{n=0} \norm{E_{\mathbf{u}}^{A \; n, \theta} }^2 \Delta t +\norm{E_p^{A \; N}}^2 \\[10pt] 
&\le C \Delta t \left[ \sum^{N-1}_{n=0} \norm{ E_{\mathbf{u}}^{I \; n, \theta}}^2+\sum^{N-1}_{n=0} \left(\frac{1+\theta}{2}\norm{p^{n+1}}_{1, \Omega^*}+\frac{1-\theta}{2}\norm{p^{n}}_{1, \Omega^*}\right)^2  \right] + C\sum^{N-1}_{n=0} \norm{\rho^{p,n,\theta}}^2 \Delta t  \\[10pt] 
&\le  C \Delta t \left[ \sum^{N-1}_{n=0} \norm{ E_{\mathbf{u}}^{I \; n, \theta}}^2+h \sum^{N-1}_{n=0} \left(\frac{1+\theta}{2}\norm{p^{n+1}}_{1,\infty, \Omega^*}+\frac{1-\theta}{2}\norm{p^{n}}_{1,\infty, \Omega^*} \right)^2  \right] + C\sum^{N-1}_{n=0} \norm{\rho^{p,n,\theta}}^2 \Delta t  \\[10pt] 
&\le  C h^2  \sum^{N-1}_{n=0} \left(\frac{1+\theta}{2}\norm{u^{n+1}}_{1}+\frac{1-\theta}{2}\norm{u^{n}}_{1} \right)^2 \Delta t+\\[10pt] 
&+C  h \sum^{N-1}_{n=0}\left(\frac{1+\theta}{2}\norm{p^{n+1}}_{1,\infty, \Omega^*}+\frac{1-\theta}{2}\norm{p^{n}}_{1,\infty, \Omega^*} \right)^2 \Delta t  + C\sum^{N-1}_{n=0} \norm{\rho^{p,n,\theta}}^2 \Delta t  \\[10pt] 
&\le C(T,\mathbf{u},p,\mathbf{K})\left(h^2+h+\Delta t^{2r}\right).
\end{split}
\end{equation*}
where 
\begin{equation*}
r=
\begin{cases*}
1, \qquad  if \quad \theta =1 \\
2, \qquad  if \quad \theta =0
\end{cases*}
\end{equation*}
We used the fact that $\sum^{N-1}_{n=0} \Delta t g_n \le C T \sum^{N-1}_{n=0} g_n $ , $|\Omega^*| \le Ch$ and property that is given in eqn. ($\ref{eqn:3_3a}$). This finishes the proof of theorem.
\end{proof}

The auxiliary error estimates theorem allows us to conclude the following theorem: 

\begin{thm}[Error estimate]
	Assume the same conditions as in the previous theorem. Then, 
	\begin{equation}
	\norm{p-p_h}^2_{l^{\infty}(L^2)}+\norm{\mathbf{u}-\mathbf{u}_h}^2_{l^2(L^2)} \le C \left( h^2+h+\Delta t^{2r}  \right)
	\end{equation}
	where $C=C(T, \mathbf{K}, \mathbf{u}, p)$ and 
	\begin{equation*}
	r=
	\begin{cases*}
	1, \qquad  if \quad \theta =1 \\
	2, \qquad  if \quad \theta =0
	\end{cases*}
	\end{equation*}
	\begin{proof}
		By applying triangle inequality,the Interpolation Error Inequalities and Theorem $\ref{thm:3_1}$ results we obtain:
		\begin{equation*}
		\begin{split}
		&\norm{p-p_h}^2_{l^{\infty}(L^2)}+\norm{\mathbf{u}-\mathbf{u}_h}^2_{l^2(L^2)} = \norm{E^I_p+E^A_p}^2_{l^{\infty}(L^2)}+\norm{E^I_{\mathbf{u}}+E^A_{\mathbf{u}}}^2_{l^2(L^2)} \le \\[10pt]
		&\le C \left(\underbrace{\norm{E^I_p}^2_{l^{\infty}(L^2)}+\norm{E^I_{\mathbf{u}}}^2_{l^2(L^2)}}_{Interpolation \; error}+\underbrace{\norm{E^A_p}^2_{l^{\infty}(L^2)}+\norm{E^A_{\mathbf{u}}}^2_{l^2(L^2)}}_{Auxiliary \; error}\right)   \le \\[10pt]
		&\le C(T,p,\mathbf{u},\mathbf{K}) (h^2+h)+O(\Delta t^{2r})
		\end{split}
		\end{equation*}
	\end{proof}
\end{thm}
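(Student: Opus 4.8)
The plan is to reduce the full error to the two pieces whose behaviour is already understood: the projection (interpolation) error, controlled purely by the approximation properties of $\hat p$ and $\Pi^*$, and the auxiliary (discrete) error, controlled by Theorem~\ref{thm:3_1}. First I would write $p-p_h=E^I_p+E^A_p$ and $\mathbf{u}-\mathbf{u}_h=E^I_{\mathbf{u}}+E^A_{\mathbf{u}}$ exactly as in the projection subsection, and apply the triangle inequality together with $(a+b)^2\le 2a^2+2b^2$ inside each of the two temporal norms. This splits the left-hand side, up to a fixed constant, into an interpolation contribution $\norm{E^I_p}^2_{l^{\infty}(L^2)}+\norm{E^I_{\mathbf{u}}}^2_{l^2(L^2)}$ and an auxiliary contribution $\norm{E^A_p}^2_{l^{\infty}(L^2)}+\norm{E^A_{\mathbf{u}}}^2_{l^2(L^2)}$.

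Second I would bound the interpolation terms using the projection inequalities (\ref{eqn:2_13}) and (\ref{eqn:2_14}). For the pressure, taking $r=1$ in (\ref{eqn:2_13}) gives $\norm{E_p^{I\,n}}\le C\norm{p^n}_1 h$ for each $n$, so that $\norm{E^I_p}^2_{l^{\infty}(L^2)}=\max_n\norm{E_p^{I\,n}}^2\le Ch^2\max_n\norm{p^n}_1^2$, which is $O(h^2)$ under the assumed regularity of $p$. For the velocity, (\ref{eqn:2_14}) applied to $E_{\mathbf{u}}^{I\,n,\theta}$ yields $\norm{E^I_{\mathbf{u}}}^2_{l^2(L^2)}=\sum_{n}\norm{E_{\mathbf{u}}^{I\,n,\theta}}^2\Delta t\le Ch^2\sum_n\norm{\mathbf{u}^{n,\theta}}_1^2\Delta t\le CTh^2\max_n\norm{\mathbf{u}^{n,\theta}}_1^2$, again $O(h^2)$. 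The only point requiring attention here is to pair each temporal norm ($l^{\infty}$ for pressure, $l^2$ for velocity) with the correct spatial estimate and to assume enough regularity, namely $p\in L^{\infty}(J,H^1)$ and $\mathbf{u}\in L^{\infty}(J,H^1)$, so that the time-maxima and time-sums are finite.

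Finally I would invoke Theorem~\ref{thm:3_1} verbatim to absorb the auxiliary contribution into $C(h^2+h+\Delta t^{2r})$ and add it to the $O(h^2)$ interpolation bound; since $h^2$ is dominated by the $h$ term already present, the combined estimate is $C(h^2+h+\Delta t^{2r})$ with $C=C(T,\mathbf{K},\mathbf{u},p)$, which is exactly the claim. I do not expect any genuine obstacle at this stage: the substantive work, namely the energy identity (\ref{eq:3_12}), the $\theta$-dependent positivity producing (\ref{eq:3_13}), the treatment of the enhanced-velocity overlap region $\Omega^*$ through (\ref{eqn:2_20a}) and the inverse inequality, and the discrete Gronwall argument, has all been done in the auxiliary estimate. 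The present theorem is a clean triangle-inequality assembly, and the mildest care needed is simply bookkeeping of the two distinct temporal norms.
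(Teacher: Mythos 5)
Your proposal is correct and follows essentially the same route as the paper: decompose $p-p_h$ and $\mathbf{u}-\mathbf{u}_h$ into interpolation and auxiliary parts, apply the triangle inequality, bound the interpolation terms by the projection estimates (\ref{eqn:2_13})--(\ref{eqn:2_14}), and absorb the auxiliary terms via Theorem \ref{thm:3_1}. You in fact supply more detail than the paper does on pairing the $l^{\infty}$ and $l^2$ temporal norms with the spatial interpolation bounds, which is a welcome clarification rather than a deviation.
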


\newpage

\section{Numerical Examples}
\index{Numerical examples%
	@\emph{Numerical examples}}%
In this section, we conduct numerical experiment to verify the numerical accuracy of parabolic problem solution using EVMFEM in space and the backward Euler in time. Based on our a \textit{priori} error analysis estimates we assume a sufficiently smooth analytical solution. In numerical examples, we set $\Omega = \left(0, 1\right) \times \left(0, 1\right) $, $\mathbf{K}_{i,j}=\delta_{i,j}$ and the domain $\Omega$ is divided into four subdomains $\Omega_i$; $\Omega_1$ and $\Omega_4$ have fine grids, $\Omega_2$ and $\Omega_3$ have coarse grids;   such mesh discretization is illustrated in Figure \ref{fig:mesh_discretization}.

\subsection{Numerical example 1}
We use the known solution
\begin{align*}
p(x,y,t)=tx(1-x)y(1-y)
\end{align*}
and use it to compute the forcing $f$, the Dirichlet boundary data $g$, and the initial data $p_0$. We carry out several levels of uniform grid refinement in each subdomains. The time step and the element size are almost equal to each other, see Table \ref{tab:errortimeN1}.
The simulation time interval is $(0; 0.1)$, i.e. $T=0.1$, and we use the Backward Euler method to integrate with regard to time with uniform time step. 
We are interested in finding the exact error using a given true solution, so the pressure is true error and the velocity error is normalized error. On applying sufficient Newton iterations at each time step provided the residual is within the machine-precision tolerance, we obtain the numerical solution for evaluating of the error in specified norm. We compute $error_p$, corresponds to $\norm{p-p_h}_{l^{\infty}(L^2)} $, which is maximum of values among time steps that resulting for given time step a discrete pressure $L^2$-norm that associates only the function values at the cell-centers in space. Also, $error_u$ is defined as $\norm{\mathbf{u}-\mathbf{u}_h}_{l^2(L^2)}$ where in space a discrete $L^2$-norm that associates only the normal vector components at the midpoint edges and then  normalized by $\norm{\mathbf{u}}_{L^2}$ and $l^2$-norm in time. The convergence rate is illustrated in Figure \ref{fig:testN4_error_pres}.
\begin{figure}[H]
	\centering
	\includegraphics[width=0.3\linewidth]{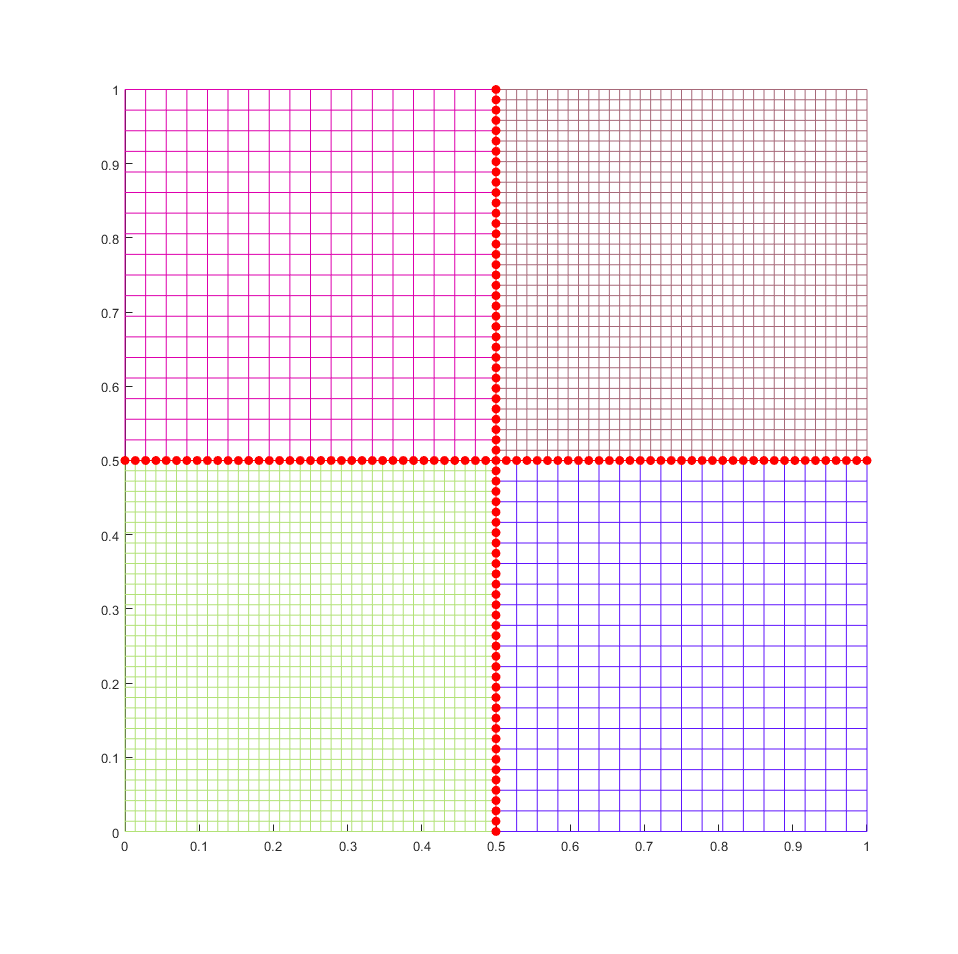}
	\includegraphics[width=0.3\linewidth]{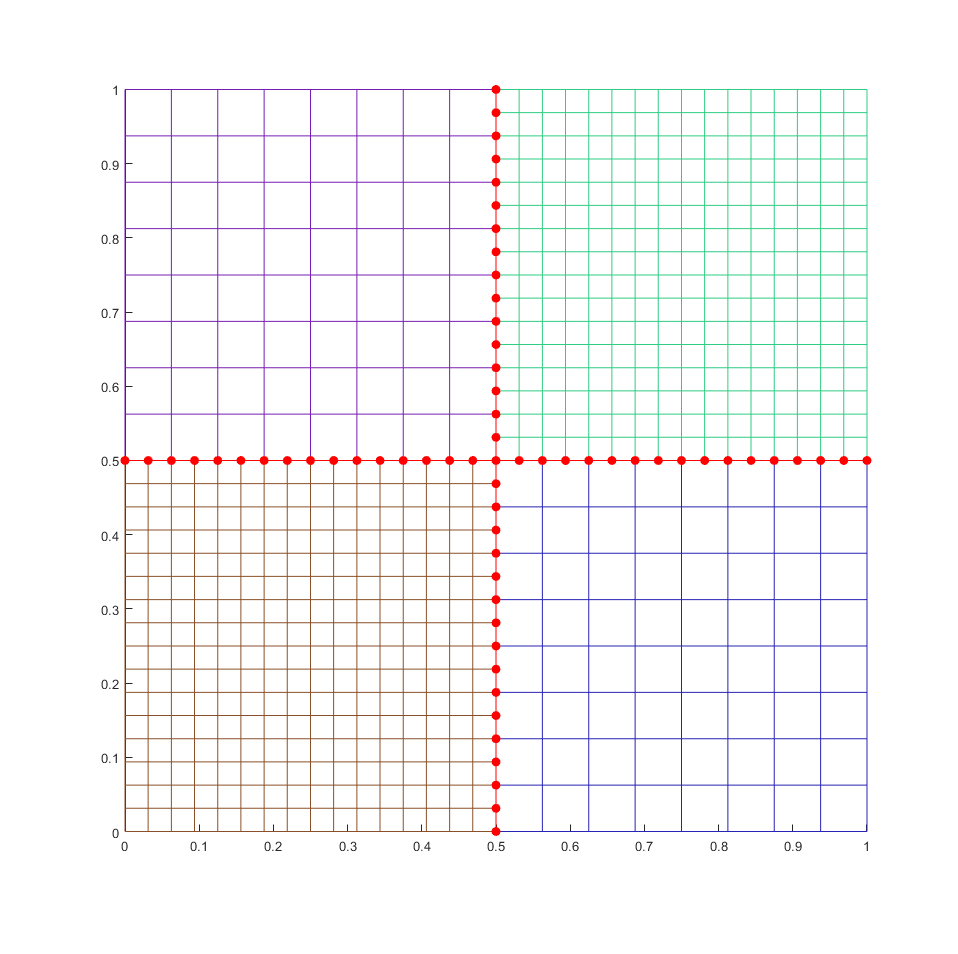}
	\includegraphics[width=0.3\linewidth]{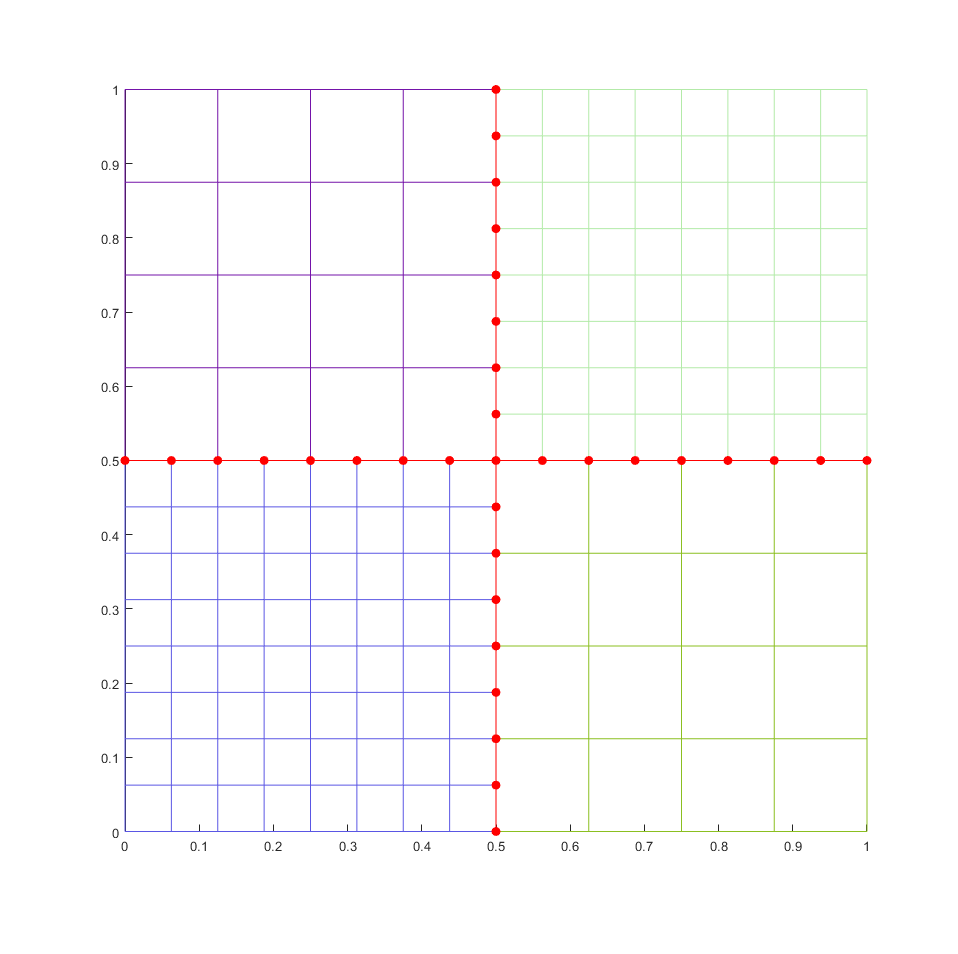}
	\caption{Example of non-matching grids for subdomains.}
	\label{fig:mesh_discretization}
\end{figure}

\begin{table}[H]
	\center
	\begin{tabular}{l|lll|r|r}
		\hline
		Level & $h$ & $H$ & $\Delta t$ & \multicolumn{1}{ c| }{$error_p$}  & $error_{\mathbf{u}}$ \\
		\hline
		1 &1/52 & 1/26 & 1/50 & 6.33e-04  & 1.51e-01  \\
		\hline
		2 & 1/100 & 1/50 & 1/100 & 3.32e-04     & 1.02e-01   \\
		\hline
		3 & 1/120 & 1/60 & 1/120 & 2.79e-04  & 9.15e-02  \\
		\hline
		4 & 1/152 & 1/76 & 1/150 & 2.26e-04   &  7.97e-02  \\
		\hline
	\end{tabular}
	\caption {Accuracy results of pressure and velocity for various levels.} \label{tab:errortimeN1} 
\end{table}

\begin{figure}[H]
	\centering
	\includegraphics[width=0.45\linewidth]{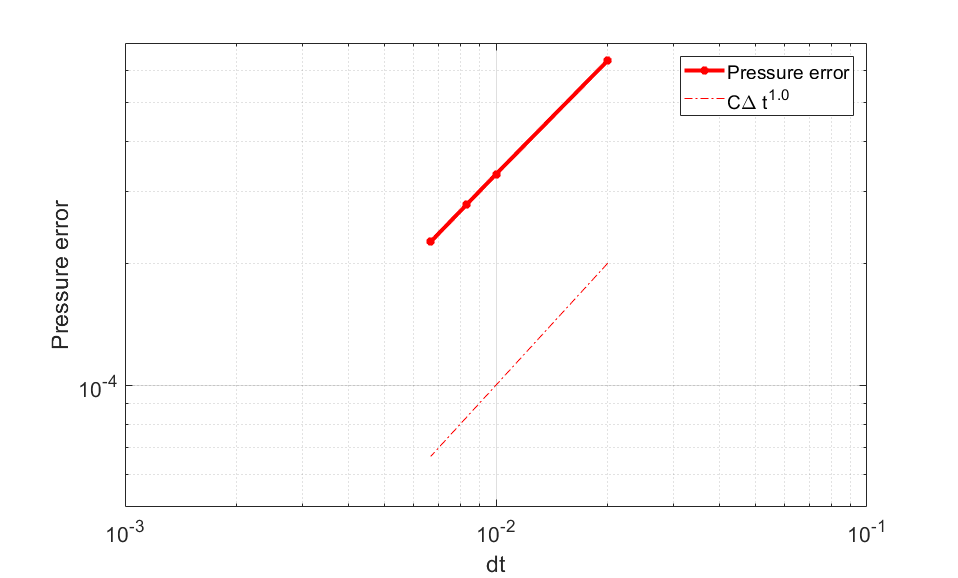}
	\includegraphics[width=0.45\linewidth]{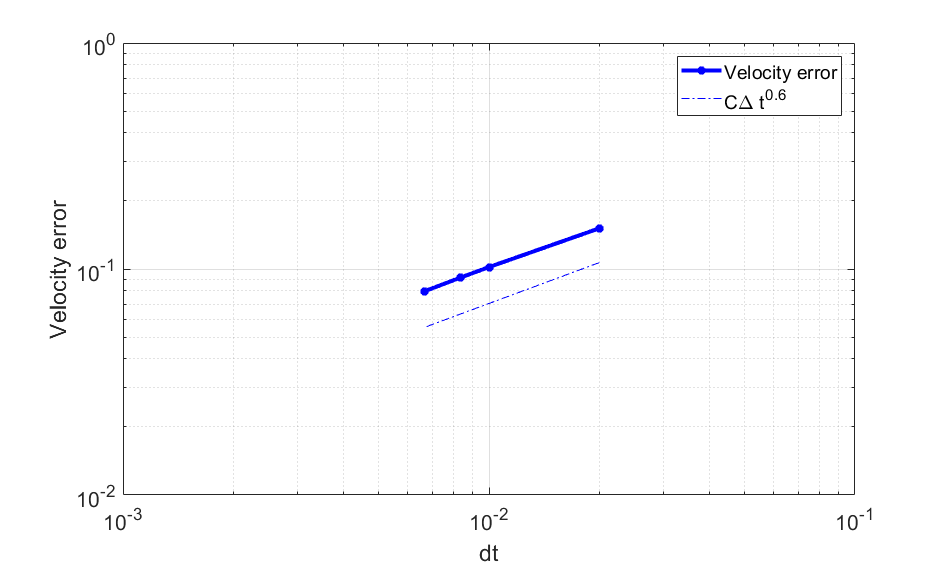}
	\caption{Convergence of the pressure and velocity error.}
	\label{fig:testN4_error_pres}
\end{figure}

\subsection{Numerical example 2}
We use manufactured the known solution
\begin{align*}
p(x,y,t)= e^{t}\sin(2\pi x)\sin(2\pi y)
\end{align*}
and use it to compute the forcing $f$, the Dirichlet boundary data $g$, and the initial data $p_0$. The time step is equal to the root of the coarse mesh size. Thus first order convergence is expected from theoretical result. 
Such mesh discretization are depicted in the Figure \ref{fig:mesh_discretization}. The simulation time interval is $(0, 2)$, i.e. $T=2$, and we use the Backward Euler method to integrate with regard to time with uniform time
step, see Table \ref{tab:errortimeN3}. The convergence rate is illustrated in Figure \ref{fig:testN3_error_vel}.

\begin{table}[H]
	\center
	\begin{tabular}{l|lll|r|r}
		\hline
		Level & $h$ & $H$ & $\Delta t$ & \multicolumn{1}{ c| }{$error_p$}  & $error_{\mathbf{u}}$ \\
		\hline
		1 &1/100 & 1/50 & 1/7 & 7.28e-01  & 8.64e-01 \\
		\hline
		2 & 1/128 & 1/64 & 1/8 &  6.38e-01   & 7.71e-01   \\
		\hline
		3 & 1/164 & 1/82 & 1/9 & 5.69e-01 & 6.87e-01   \\
		\hline
		4 & 1/200 & 1/100 & 1/10 & 5.13e-01 & 6.24e-01 \\
		\hline
	\end{tabular}
	\caption {Accuracy results of pressure and velocity for various levels.} \label{tab:errortimeN3} 
\end{table}

\begin{figure}[H]
	\centering
	\includegraphics[width=0.45\linewidth]{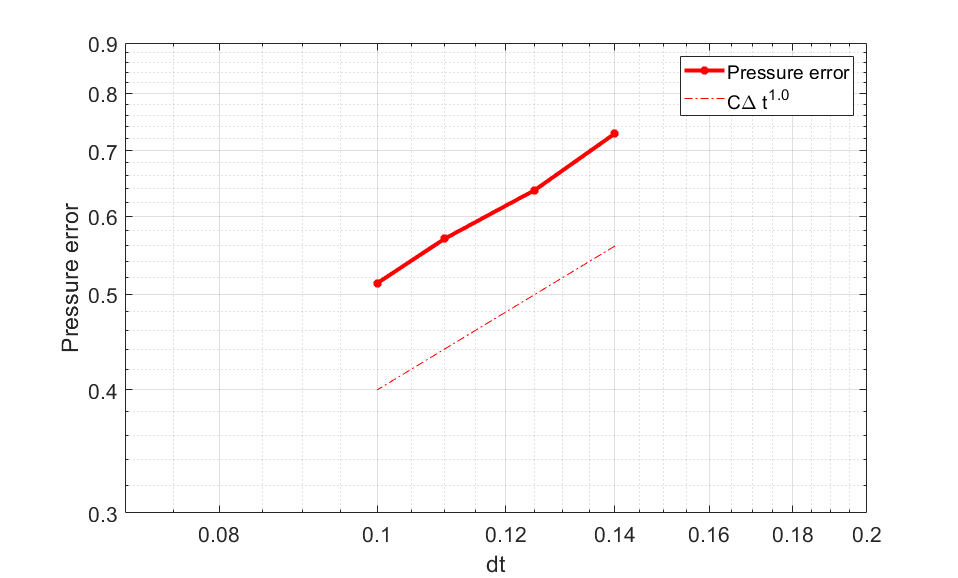}
	\includegraphics[width=0.45\linewidth]{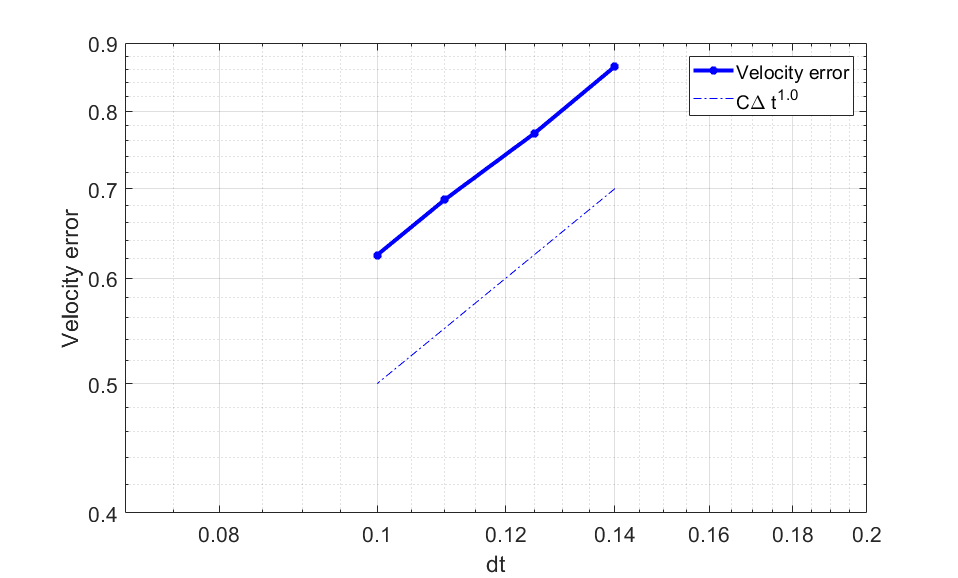}
	\caption{Convergence of pressure and velocity error.}
	\label{fig:testN3_error_vel}
\end{figure}


\section{Conclusion}
This research has provided a \textit{priori} error analysis for transient problems or slightly compressible flow problems through the heterogeneous porous media using Enhanced Velocity scheme as the domain decomposition method in space that coupled with backward Euler or Crank-Nicolson method in the time setting. In these discretization settings, we obtained the first order convergence rate for the backward Euler method and the second order convergence rate for the Crank-Nicolson method. Numerical experiments are provided.   The results suggest that this approaches could also be useful for the engineering subsurface applications including CO$_2$ sequestration, etc. In our future research, we plan to concentrate on parareal algorithms to achieve efficiency in time discretization that allow to run simulation efficiently for the long time range.
 
\section*{Acknowledgements} 
First author thanks Drs. T. Arbogast and I. Yotov for some helpful discussions during analysis of method.

\bibliographystyle{unsrt}  
\bibliography{priori_error_manuscript}

\end{document}